\newtheorem{theorem}{Theorem}
\newtheorem{remark}{Remark}
\theoremstyle{thmstyleone}%
\theoremstyle{thmstyletwo}%
\theoremstyle{thmstylethree}%
\begin{document}

\title[Article Title]{Asymptotic analysis  of a stochastic SVEIS epidemic model using  Black-Karasinski process}


\author{\fnm{ Lahcen Khammich}}

\author{\fnm{Driss Kiouach}}

\affil{{L2MASI Laboratory},
            {Faculty of sciences Dhar El Mahraz}, 
           {Sidi Mohamed Ben Abdellah University},
            {Fez},
            {Morocco}}


\abstract{In this paper, we present a stochastic SVEIS epidemic model perturbed by a 
Black–Karasinski process. Using a Lyapunov functional approach, we derive a sufficient condition, $\mathcal{R}_0^s > 1$ for the existence of a stationary distribution, which indicates disease persistence. Additionally, we theoretically demonstrate that the disease will die out at an exponential rate if $\mathcal{R}_0^e <1$. Our results show that random fluctuations will facilitate disease outbreak.}

\keywords{Stochastic SVEIS epidemic model, Stationary distribution, Extinction, Black–Karasinski process.}



\maketitle
\section{Introduction}
Epidemiological modeling is a crucial tool for understanding and predicting the dynamics of infectious diseases. Since the foundational work of Kermack and McKendrick in the early 20th century \citep{1927}, mathematical models have become central to the study of epidemiology. These models are often based on compartmental frameworks, where the population is divided into distinct groups according to disease status, such as susceptible, exposed, or infected. The evolution of the disease is then described through a system of ordinary differential equations that govern the transitions between these compartments.\\
Mathematical models play a vital role in predicting the progression of an epidemic and designing strategies to mitigate its spread. In this study, we focus on the SVEIS (Susceptible-Vaccinated-Exposed-Infectious-Susceptible) model introduced by Yunquan Song and Xinhong Zhang in \citep{Song2022}. The deterministic form of the SVEIS model is expressed as the following system of equations:

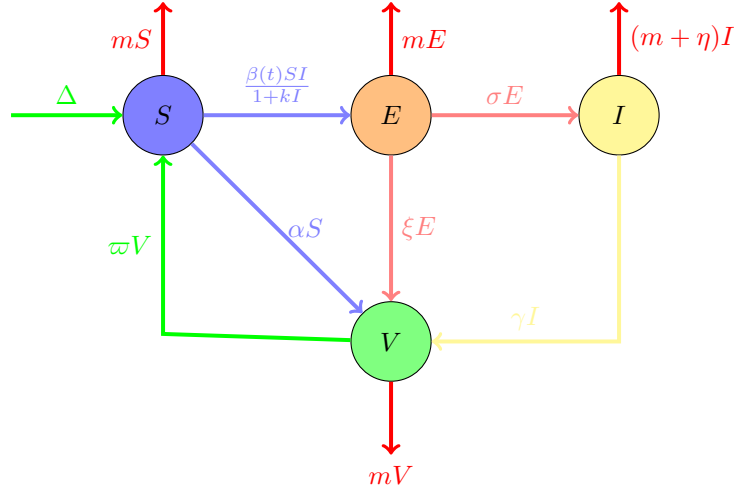
\begin{figure}[H]
\centering
\begin{tikzpicture}[node distance=2cm, auto]
    \tikzstyle{human}=[circle, draw, fill=blue!50, node distance=3cm, minimum height=3em]
    \tikzstyle{infected}=[circle, draw, fill=orange!50, node distance=3cm, minimum height=3em]
    \tikzstyle{vector}=[circle, draw, fill=yellow!50, node distance=3cm, minimum height=3em]
    \tikzstyle{recovered}=[circle, draw, fill=green!50, node distance=3cm, minimum height=3em]
    
    \node[human] (S) {$S$};
    \node[infected, right of=S] (E) {$E$};
    \node[vector, right of=E] (I) {$I$};
    \node[recovered, below of=E] (V) {$V$};
    \draw[->,line width=1.5pt,color=blue!50] (S) -- (E) node[midway, above] 
    {$\frac{\beta(t) SI}{1+kI}$};
    \draw[->,line width=1.5pt,color=green] (-2,0) -- (S) node[midway, above] {$\Delta$};
   \draw[->,line width=1.5pt,color=red] (S) -- (0,1.5) node[midway, left] {$m S$};
   \draw[->,line width=1.5pt,color=green](V) -- (0,-2.9) -- (S) node[midway, left]
    {$\varpi V$};
   \draw[->,line width=1.5pt,color=blue!50] (S) -- (V) node[midway, right] 
    {$\alpha S$};
    \draw[->,line width=1.5pt,color=red!50] (E)  -- (V) node[midway, right] 
    {$\xi E$};
    \draw[->,line width=1.5pt,color=red!50] (E) -- (I) node[midway, above] 
    {$\sigma E$};
    \draw[->,line width=1.5pt,color=red] (E) -- (3,1.5) node[midway, right] {$m E$};
    \draw[->,line width=1.5pt,color=red] (I) -- (6,1.5) node[midway, right] {$(m+\eta) I$}; 
    \draw[->,line width=1.5pt,color=yellow!50] (I) -- (6,-3) -- (V) node[midway, above] {$\gamma I$}; 
    \draw[->,line width=1.5pt,color=red] (V) -- ++(0,-1.5) node[below] {$m V$};
    \end{tikzpicture}
    \caption{Schematic diagram for SEIV model}
    \label{fig:exemple}  
\end{figure}
\begin{equation}
\left\{\begin{array}{l}
\mathrm{d} S(t)=\left(\Pi-\alpha S-\frac{\beta S I}{1+k I}-m S+\omega V\right) \mathrm{d} t, \\
\mathrm{~d} V(t)=(\alpha S+\gamma I+\xi E-(m+\omega) V) \mathrm{d} t, \\
\mathrm{~d} E(t)=\left(\frac{\beta S I}{1+k I}-(\mu+\sigma+\xi) E\right) \mathrm{d} t, \\
\mathrm{~d} I(t)=(\sigma E-(m+\gamma+\eta) I) \mathrm{d} t, \\
\end{array}\right.
\end{equation}\label{1}
\begin{table}[h]
\centering
\begin{tabular}{clccc }
\hline 
Parameters& Description  \\ 
\hline 
$\Pi$&$\text { The Recruitment rate  }  $ \\ 

$ \alpha$& The vaccination rate  \\ 

$\mu$&The natural death rate   \\ 

$\frac{1}{\omega}$&  The average time of immunity waning \\ 
 
$\frac{1}{\gamma}$&  The mean infectious period  \\ 

$ \beta$&  The disease transmission coefficient \\ 

$\frac{1}{\sigma}$&  The latent period   \\ 

$k$ & The inhibition effect  \\ 

$\xi$ &  The recovery rate of exposed class\\  

$\eta$& The rate of disease-related death \\ 
\hline 
\end{tabular}
\caption{List of parameters} 
\end{table}\label{Table1}

Where All parameters are assumed to be positive constants, with their respective descriptions provided in Table \ref{Table1}. By the analysis in \citep{Song2022} the reproduction number is\\ $$\mathcal{R}_0=\frac{\sigma\beta \Pi\left(m+\omega\right)}{m\left(m+\alpha+\omega\right)\left(m+\gamma+\eta\right)\left(m+\sigma+\xi\right)}$$ for more detail about asymptotically analysis and disease equilibrium see \citep{Song2022}.\\
However, due to environmental variability and irregular human activities, disease transmission is often influenced by different types of random noise. As a result, key parameters in the progression of an infectious disease pandemic are not constant but subject to stochastic fluctuations \citep{zhou2023threshold}. This highlights the necessity of studying randomly perturbed SEIS models. Typically, two common approaches are used to incorporate stochastic perturbations: linear Gaussian noise \citep{Lan2021,Zhai2023} and the Ornstein-Uhlenbeck process \citep{Cai2018,Zhou2022}. Notably, Allen \citep{Allen2016} emphasized that the Ornstein-Uhlenbeck process, compared to linear Gaussian noise, has several advantages, including continuity, asymptotic distribution characteristics, and its practical ability to describe the impact of environmental changes on disease dynamics. Based on this reasoning, the incidence rate 
, a critical parameter in the model, can be treated as a stochastic process 
 governed by an Ornstein-Uhlenbeck dynamic., the standard Ornstein-Uhlenbeck process does not guarantee the non-negativity required for epidemiological parameters. To address this, 
 can be modeled using the Black-Karasinski process \citep{Han2023,Han2025}, where the logarithm of 
 satisfies an Ornstein-Uhlenbeck process:
the stochastic model is given by the following equation:
$$
\mathrm{d}\ln \beta=\theta(\ln \bar{\beta}-\ln \beta) d t+\delta d B(t) .
$$
Here, $\bar{\beta}$ denotes the long-term average infection rate, while $B(t)$ represents a standard Brownian motion. Additionally, $\theta$ and $\delta$ are positive constants. In this context, $k$ denotes the speed of reversion, while $\sigma$ indicates the volatility intensity. \\
then by this denotation a stochastic version of \ref{1} is  given by:\\
\begin{equation}
\left\{\begin{array}{l}
 \mathrm{d}\ln \beta=\theta(\ln \bar{\beta}-\ln \beta) d t+\delta d B(t)\\
\mathrm{d} S(t)=\left(\Pi-\alpha S-\frac{\beta S I}{1+k I}-m S+\omega V\right) \mathrm{d} t, \\
\mathrm{~d} V(t)=(\alpha S+\gamma I+\xi E-(m+\omega) V) \mathrm{d} t, \\
\mathrm{~d} E(t)=\left(\frac{\beta S I}{1+k I}-(m+\sigma+\xi) E\right) \mathrm{d} t, \\
\mathrm{~d} I(t)=(\sigma E-(m+\gamma+\eta) I) \mathrm{d} t, \\
\end{array}\right.
\end{equation}\label{2}
 In fact, our main aim is to study the asymptotic behavior of infectious disease, i.e., the long-term properties of $ (S(t),V(t), E(t),I(t))$. By
 letting $z=\ln \beta-\ln \bar{\beta}$ we can equivalently transform system \ref{2}  into \\
\begin{equation}
\left\{\begin{array}{l}
\mathrm{d} S(t)=\left(\Pi-\alpha S-\frac{\bar{\beta} \mathrm{e}^z S I}{1+k I}-\mu S+\omega V\right) \mathrm{d} t, \\
\mathrm{~d} V(t)=(\alpha S+\gamma I+\xi E-(m+\omega) V) \mathrm{d} t, \\
\mathrm{~d} E(t)=\left(\frac{\bar{\beta} \mathrm{e}^z S I}{1+k I}-(m+\sigma+\xi) E\right) \mathrm{d} t, \\
\mathrm{~d} I(t)=(\sigma E-(m+\gamma+\eta) I) \mathrm{d} t, \\
\mathrm{~d} z(t)=-\theta z(t) d t+\delta d B(t)
\end{array}\right.\end{equation}\label{3}\\
The rest of this paper is structured as follows: Section \ref{section2} show the existence and uniqueness of the global solution.  Sections \ref{section3} and \ref{section 4}, present the necessary conditions for the existence of a stationary distribution and the conditions for extinction, respectively. Finally, the main conclusion of the paper are discussed in Section \ref{section 5}.\\
\section{Existence and uniqueness of the global solution}\label{section2}
\begin{theorem}
For any initial condition $(S(0), V(0), E(0), I(0),z(0)) \in \mathbb{R}_+^4 \times \mathbb{R}$ the system \ref{3} admits a unique global solution 
almost surely , and  the solution 
 remains forever in the invariant set:$$\Gamma = \left\{ (S, V, E, I,z) \in \mathbb{R}_+^4 \times \mathbb{R} \, \middle| \, S(t) + V(t) + E(t)+I(t) \leq \frac{\Pi}{m},S\leq S^0 \right\}$$.
\end{theorem}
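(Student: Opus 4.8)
The plan is to follow the standard programme for stochastic epidemic systems: produce a unique maximal local solution from the regularity of the coefficients, rule out finite-time explosion by a Lyapunov-plus-stopping-time argument, and confirm forward invariance of $\Gamma$ by comparison estimates on the drift. First I would observe that the drift coefficients of system \ref{3} are locally Lipschitz on $\mathbb{R}_+^4 \times \mathbb{R}$: the factor $\mathrm{e}^z$ is smooth and the saturated incidence $SI/(1+kI)$ is smooth on $\{I \ge 0\}$. Hence the standard SDE existence theory yields a unique solution on $[0,\tau_e)$, where $\tau_e$ is the explosion time, and everything reduces to proving $\tau_e = \infty$ almost surely.

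Next I would establish the two deterministic bounds defining $\Gamma$. Setting $N = S+V+E+I$ and adding the first four equations, the incidence, vaccination and waning cross-terms cancel and leave $\mathrm{d}N = (\Pi - mN - \eta I)\,\mathrm{d}t \le (\Pi - mN)\,\mathrm{d}t$; a comparison argument then gives $N(t) \le \Pi/m$ whenever $N(0) \le \Pi/m$. Since $E,I \ge 0$ this implies $V \le \Pi/m - S$, which I would insert into the $S$-equation to obtain $\mathrm{d}S \le \big(\Pi(m+\omega)/m - (\alpha+m+\omega)S\big)\,\mathrm{d}t$; a second comparison then yields $S(t) \le S^0 = \Pi(m+\omega)/\big(m(\alpha+m+\omega)\big)$ whenever $S(0) \le S^0$. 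Positivity of each compartment is automatic, since every coordinate drift has the form (nonnegative production) $-$ (rate)$\times$(that coordinate), so no trajectory crosses a coordinate hyperplane from inside $\mathbb{R}_+^4$.

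To exclude explosion I would use a Lyapunov function of the type
$$\mathcal{V} = \big(S - a - a\ln(S/a)\big) + (V - 1 - \ln V) + (E - 1 - \ln E) + (I - 1 - \ln I) + \tfrac{1}{2}z^2,$$
which is nonnegative and blows up as any of $S,V,E,I$ tends to $0^+$ or to $\infty$, or as $|z|\to\infty$. Applying It\^o's formula, the term $\tfrac12 z^2$ contributes $-\theta z^2 + \tfrac12\delta^2$, bounded above, while the population bound controls the polynomial growth of the compartments. I would then define $\tau_n = \inf\{t : \min\{S,V,E,I\} \le 1/n \text{ or } \max\{S,V,E,I,|z|\} \ge n\}$ and aim at the estimate $\mathbb{E}[\mathcal{V}(\tau_n \wedge T)] \le \mathcal{V}(0) + CT$, from which letting $n\to\infty$ contradicts $\mathbb{P}(\tau_e \le T) > 0$ and forces $\tau_e = \infty$.

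The hard part is the incidence term: differentiating $-a\ln S$ and $-\ln E$ generates contributions proportional to $\mathrm{e}^z$, and because $z$ ranges over all of $\mathbb{R}$ the quantity $\mathcal{L}\mathcal{V}$ is \emph{not} bounded by a constant, so the naive estimate fails. I would resolve this as is characteristic of the Black--Karasinski setting: the $z$-equation is a linear Ornstein--Uhlenbeck SDE, hence globally solvable with $\sup_{s\le T}|z(s)|<\infty$ almost surely and, decisively, with finite exponential moments $\mathbb{E}[\mathrm{e}^{z(s)}] < \infty$ for each $s$. It then suffices to prove the sharper bound $\mathcal{L}\mathcal{V} \le C_1 + C_2\,\mathrm{e}^{z}$; integrating and taking expectations gives $\mathbb{E}\big[\int_0^{\tau_n\wedge T}\mathrm{e}^{z(s)}\,\mathrm{d}s\big] \le \int_0^T \mathbb{E}[\mathrm{e}^{z(s)}]\,\mathrm{d}s < \infty$, which keeps $\mathbb{E}[\mathcal{V}(\tau_n\wedge T)]$ bounded uniformly in $n$. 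Passing to the limit yields $\tau_e=\infty$ a.s., and together with the comparison bounds this shows the global solution remains in $\Gamma$ for all time.
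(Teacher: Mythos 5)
Your proposal is correct, and on the one technically delicate point it takes a genuinely different route from the paper. The paper's proof is only sketched (a remark referring to Theorem 3.1 of \citep{Han2023}), but its stated Lyapunov function is $W=[S-1-\ln S]+[V-1-\ln V]+[E-1-\ln E]+[I-1-\ln I]+\mathrm{e}^z-1-z$: the summand $\mathrm{e}^z-1-z$ is chosen precisely so that the generator contributes $-\theta z(\mathrm{e}^z-1)+\frac{\delta^2}{2}\mathrm{e}^z$, whose negative part dominates every incidence term of order $\mathrm{e}^z$ (cf.\ the same device with the constant $B$ in the stationary-distribution proof), so that $\mathscr{L}W$ is bounded above by a \emph{constant} and the classical Khasminskii--Mao stopping argument applies verbatim. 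You instead keep the quadratic $\tfrac12 z^2$, accept the weaker bound $\mathcal{L}\mathcal{V}\le C_1+C_2\mathrm{e}^z$ (valid, e.g.\ via $I/(1+kI)\le 1/k$), and exploit the fact that the $z$-equation is an autonomous, explicitly solvable Ornstein--Uhlenbeck SDE with uniformly bounded exponential moments, so that $\mathbb{E}\bigl[\int_0^{\tau_n\wedge T}\mathrm{e}^{z(s)}\,\mathrm{d}s\bigr]\le\int_0^T\mathbb{E}[\mathrm{e}^{z(s)}]\,\mathrm{d}s<\infty$ keeps $\mathbb{E}[\mathcal{V}(\tau_n\wedge T)]$ bounded uniformly in $n$. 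Both arguments are sound; the paper's choice buys a pathwise-clean bound $\mathscr{L}W\le C$ that needs no moment computation and generalizes to settings where the noise process is not explicitly solvable, while yours is more elementary and makes transparent exactly where the Black--Karasinski structure is used. Two further remarks in your favor: your explicit comparison derivations of $N\le\Pi/m$ and $S\le S^0=\Pi(m+\omega)/\bigl(m(m+\alpha+\omega)\bigr)$ actually supply the invariance claim that the paper asserts without detail (and you correctly restrict to initial data satisfying the bounds, quietly repairing the paper's overstated ``any initial condition in $\mathbb{R}_+^4\times\mathbb{R}$''); and since the diffusion acts only on $z$, your own bounds already confine $(S,V,E,I)$ pathwise to a compact set on the interval of existence, so explosion could be excluded by a purely pathwise ODE argument --- your expectation estimate is safe but not the minimal route.
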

\begin{remark}
By defining  a desirable non-negative $C^2$-function\\ $W=[S-1-\ln S]+[V-1-\ln V]+[E-1-\ln E]+[I-1-\ln I]+\mathrm{e}^z-1-z$  The remainder of the
 proof is almost the same as Theorem 3.1 in \citep{Han2023} and is thus omitted.
\end{remark}

\section{Stationary distribution }\label{section3}

\begin{theorem}
If $\mathcal{R}_0^s > 1$, where $$\mathcal{R}_0^s=\frac{\bar{\beta}\Pi\sigma(m+\omega)\mathrm{e}^{\frac{\delta^2}{16\theta}}}{m(m+\alpha+\omega)(m+\gamma+\eta)(m+\sigma+\xi)}$$ then the stochastic system described by equation \ref{3} admits at least one ergodic stationary distribution, denoted by $\eta(.)$, within the domain $\Gamma$.
\end{theorem}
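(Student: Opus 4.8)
The plan is to invoke Khasminskii's criterion for the existence of an ergodic stationary distribution. Writing $\mathcal{L}$ for the generator of system \eqref{3}, the task reduces to two ingredients: (i) a Foster--Khasminskii drift condition, i.e. a non-negative $C^{2}$ function $\mathcal{V}$ on $\Gamma$ together with a compact set $K\subset\Gamma$ such that $\mathcal{L}\mathcal{V}\le-1$ on $\Gamma\setminus K$; and (ii) a regularity/non-degeneracy condition ensuring the process admits a well-defined invariant law on $\Gamma$. Since Theorem~1 already confines $(S,V,E,I)$ to a bounded set, the only unbounded directions are the faces $S,V,E,I\to 0^{+}$ and $z\to\pm\infty$, and that is where (i) must be verified. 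The noise enters only through the $z$-equation, so the diffusion matrix is degenerate; I would secure (ii) by verifying the H\"{o}rmander bracket condition on the interior of $\Gamma$ --- with $g=\delta\partial_z$ one finds $[g,b]$ already has nonzero components along $\partial_S$ and $\partial_E$ whenever $S,I>0$, and successive brackets recover the $\partial_V,\partial_I$ directions, so the bracket family spans $\mathbb{R}^{5}$ --- or, since only existence of \emph{at least one} ergodic measure is asserted, by combining the Feller property with the tightness furnished by (i) (Krylov--Bogolyubov) and an ergodic decomposition.

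For the drift condition the core is a logarithmic block in the infective compartments, $\mathcal{V}_{1}=-\ln E-a\ln I$ with $a>0$ to be fixed. A direct computation gives
\begin{equation*}
\mathcal{L}\mathcal{V}_{1}=-\frac{\bar{\beta}\,\mathrm{e}^{z}SI}{(1+kI)E}-\frac{a\sigma E}{I}+(m+\sigma+\xi)+a(m+\gamma+\eta),
\end{equation*}
and applying $x+y\ge 2\sqrt{xy}$ to the first two terms yields
\begin{equation*}
\mathcal{L}\mathcal{V}_{1}\le-2\sqrt{\frac{a\sigma\bar{\beta}S}{1+kI}}\;\mathrm{e}^{z/2}+(m+\sigma+\xi)+a(m+\gamma+\eta).
\end{equation*}
On the disease-free face $S\to S^{0}$, $I\to 0^{+}$ the prefactor tends to $2\sqrt{a\sigma\bar{\beta}S^{0}}$; once $\mathrm{e}^{z/2}$ is replaced by its stationary average (next paragraph) and $a$ is optimized, the constant part becomes strictly negative whenever $\mathcal{R}_0^{s}>1$. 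This is the mechanism by which the threshold enters.

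The appearance of the constant $\mathrm{e}^{\delta^{2}/(16\theta)}$ is the heart of the matter, and the step I expect to be the main obstacle: it is exactly $\int_{\mathbb{R}}\mathrm{e}^{z/2}\,\pi(\mathrm{d}z)$, the mean of $\mathrm{e}^{z/2}$ under the stationary law $\pi=N\!\big(0,\delta^{2}/(2\theta)\big)$ of the Ornstein--Uhlenbeck component. To promote this average into a \emph{pointwise} drift bound I would introduce an auxiliary function $\varphi(z)$ solving the Poisson-type equation
\begin{equation*}
-\theta z\,\varphi'(z)+\frac{\delta^{2}}{2}\varphi''(z)=\mathrm{e}^{\delta^{2}/(16\theta)}-\mathrm{e}^{z/2},
\end{equation*}
whose right-hand side is centered with respect to $\pi$; this centering is what guarantees a $C^{2}$ solution with controlled (at most exponential) growth. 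Adding a suitable multiple of $\varphi$ to $\mathcal{V}$ then replaces $\mathrm{e}^{z/2}$ by the constant $\mathrm{e}^{\delta^{2}/(16\theta)}$ up to a bounded remainder. The delicate points are the solvability and the growth estimate for $\varphi$, and checking that its growth does not overwhelm the drift as $z\to\pm\infty$.

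Finally I would close the remaining faces. Adding $-\ln S$ and $-\ln V$ supplies the terms $-\Pi/S$ and $-(\alpha S+\gamma I+\xi E)/V$, which tend to $-\infty$ as $S\to0^{+}$ and $V\to0^{+}$; the term $\tfrac{1}{2}z^{2}$ gives $\mathcal{L}(\tfrac{1}{2}z^{2})=-\theta z^{2}+\delta^{2}/2\to-\infty$ as $z\to-\infty$, while an exponential corrector of the form $c\,\mathrm{e}^{z}$ dominates the positive contribution $\bar{\beta}\mathrm{e}^{z}I/(1+kI)$ (arising from $\mathcal{L}(-\ln S)$) as $z\to+\infty$, and the upper faces are controlled by the $\Gamma$-bounds of Theorem~1. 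Assembling $\mathcal{V}=M\mathcal{V}_{1}+(\text{correctors})$ with $M$ large and taking $K$ to be a suitable compact subset of $\Gamma$, the single negative constant from the $\mathcal{R}_0^{s}$-block dominates off $K$, giving $\mathcal{L}\mathcal{V}\le-1$ there. Together with the regularity in (ii), Khasminskii's theorem delivers an ergodic stationary distribution $\eta(\cdot)$ supported in $\Gamma$.
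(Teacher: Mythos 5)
There is a genuine gap in your drift condition, and it sits exactly where the construction has to work hardest: the corner of $\Gamma$ where $E$ and $I$ are small but $S$ is \emph{not} close to $S^{0}$. Your two-term AM--GM leaves the state-dependent prefactor $\sqrt{S/(1+kI)}$ in the principal negative term, so after replacing $\mathrm{e}^{z/2}$ by its stationary mean you only get
\[
\mathcal{L}\mathcal{V}_{1}\le -2\sqrt{\tfrac{a\sigma\bar{\beta}S}{1+kI}}\,\mathrm{e}^{\delta^{2}/(16\theta)}+(m+\sigma+\xi)+a(m+\gamma+\eta),
\]
which is negative only when $S$ is near $S^{0}$. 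At points such as $S=S^{0}/2$, $E=I=\epsilon$ --- which leave every compact $K\subset\Gamma$ as $\epsilon\to0$ and hence must satisfy $\mathcal{L}\mathcal{V}\le-1$ --- the right-hand side can be positive (certainly when $\mathcal{R}_0^{s}$ is only slightly above $1$); worse, since you multiply the main block by a large constant $M$, a positive drift there becomes of order $M$, and none of your correctors helps: the drifts of $-\ln S$, $-\ln V$, $\tfrac12 z^{2}$ and $c\,\mathrm{e}^{z}$ are all bounded in that region ($-\Pi/S$ only blows up for $S$ near $0$, leaving an uncovered band of intermediate $S$ with $E,I\to0^{+}$). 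So your ``disease-free face'' limit does not verify the Foster--Khasminskii bound on all of $\Gamma\setminus K$. The paper's fix is precisely the missing idea: it folds the susceptible block $-\ln S-c_{1}\ln V$ and a linear term $c_{4}I$ into the infective block and performs a \emph{four-term} AM--GM on $\frac{\bar{\beta}\mathrm{e}^{z}SI}{E(1+kI)}$, $c_{2}\frac{\Pi}{S}$, $c_{3}\frac{\sigma E}{I}$ and $c_{4}\frac{m+\gamma+\eta}{k}(1+kI)$, whose product cancels \emph{every} state variable and yields the state-free constant $-4\sqrt[4]{\bar{\beta}\mathrm{e}^{z}c_{2}c_{3}c_{4}\Pi\sigma(m+\gamma+\eta)/k}$; balancing $c_{2},c_{3},c_{4}$ then gives $\mathscr{L}\phi_{3}\le-\tfrac{(m+\sigma+\xi)(\mathcal{R}_0^{s}-1)}{2}+C\sigma E+h_{1}(z)+h_{2}(z)$ \emph{uniformly on} $\Gamma$, which is what the large-$M$ argument actually requires. (Note also that in the paper the constant $\mathrm{e}^{\delta^{2}/(16\theta)}$ arises from the mean $\mathrm{e}^{\delta^{2}/(64\theta)}$ of $\mathrm{e}^{z/4}$ entering inside the fourth root, not from $\int\mathrm{e}^{z/2}\pi(\mathrm{d}z)$ --- although your evaluation of that integral is numerically correct.)

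Two secondary points of comparison. First, the paper never solves your Poisson-type equation: it keeps the centered fluctuation terms $h_{1}(z)=\bar{\beta}\frac{\Pi}{m}c_{2}c_{0}(\mathrm{e}^{2z}-\mathrm{e}^{\delta^{2}/\theta})$ and $h_{2}(z)=4\sqrt[4]{\cdots}\,(\mathrm{e}^{\delta^{2}/(64\theta)}-\mathrm{e}^{z/4})$, whose long-run time averages vanish by ergodicity of the OU component, and replaces the pointwise Khasminskii criterion by an occupation-time argument: from $F\le-1$ off $\mathbb{H}$ and $F\le K$ on $\Gamma$ it deduces $\liminf_{T\to\infty}\frac1T\int_{0}^{T}\mathbf{1}_{\mathbb{H}}\,\mathrm{d}t\ge\frac{1}{K+1}$ a.s.\ and concludes via Fatou's lemma. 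Your Poisson-corrector route to a truly pointwise drift bound is viable in principle (the centered right-hand side yields a solution of subexponential relative growth, harmless against your $c\,\mathrm{e}^{z}$ term), but it is additional work the paper's averaging argument avoids. Second, your attention to nondegeneracy via H\"{o}rmander brackets is actually \emph{more} careful than the paper, which asserts ergodicity without verifying any such condition; that is a strength of your plan, but it does not repair the drift gap identified above.
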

\label{theorem}
\begin{proof}
Let defining a series of suitables $C^2$ functions:
$$
\begin{aligned}
&\phi_1= -\ln S -c_1\ln V\\
&\phi_2=-\ln E + c_2\phi_1-c_3\ln I+c_4I\\
&\phi_3= \phi_2 +\frac{c_2 \bar{\beta}}{4c_0(m+\gamma+\eta)}I\\
&\phi_4=-\ln S-\ln V-\ln I-\ln( S^0-S)-\ln(\frac{\Pi}{m}-S-E-V-I)+\mathrm{e}^z-z-1\\
&\phi=M\phi_3+\phi_4\\
\end{aligned}
$$
Applying Ito's formula for the functions above, we have\\
\begin{equation}
\begin{aligned}
\mathscr{L}\phi_1&=-\frac{1}{S}\left(\Pi-\alpha S-\frac{\bar{\beta} \mathrm{e}^z S I}{1+k I}-m S+\omega V\right)-\frac{c_1}{V}\left(\alpha S+\gamma I+\xi E-(m+\omega) V\right)\\
&\leq-\frac{\Pi}{S}+\alpha+m+(\chi -2\sqrt{c_1 \varpi \alpha} +c_1(m+\omega)+\bar{\beta}\mathrm{e}^z I \\
&\leq -\frac{\Pi}{S}+\frac{m(m+\alpha+\omega)}{m+\omega}+\bar{\beta}I(c_0 \mathrm{e}^{2z}+\frac{1}{4 c_0})\\
&\leq  -\frac{\Pi}{S}+\frac{m(m+\alpha+\omega)}{m+\omega}+ \bar{\beta}\frac{\Pi}{m}c_0\left(\mathrm{e}^{2z}-\mathrm{e}^{\frac{\delta^2}{\theta}}\right)+\bar{\beta}c_0 \mathrm{e}^{\frac{\delta^2}{\theta}} \frac{\Pi}{m}+ \frac{\bar{\beta I}}{4c_0}
\end{aligned}
\end{equation}
For $\phi_2$ we have: \\
\begin{equation}
\begin{aligned}
\mathscr{L}\phi_2&\leq -\frac{\bar{\beta} \mathrm{e}^z S I}{E (1+kI)}+(m+\sigma+\xi) -c_2\frac{\Pi}{S}+c_2 \frac{m(m+\alpha+\omega)}{m+\omega}+\bar{\beta}c_0 c_2 \mathrm{e}^{\frac{\delta^2}{\theta}} \frac{\Pi}{m}+\frac{c_2\bar{\beta I}}{4c_0}\\
&-c_3 \frac{\sigma E}{I} +c_3 (m+\gamma+\eta) +c_4 \sigma E -c_4\frac{m+\gamma+\eta}{k}(1+kI)+c_4\frac{m+\gamma+\eta}{k}\\
&+\bar{\beta}\frac{\Pi}{m}c_2 c_0\left(\mathrm{e}^{2z}-\mathrm{e}^{\frac{\delta^2}{\theta}}\right)\\
 &\leq (m+\sigma+\xi) -4 \sqrt[4]{\bar{\beta}\mathrm{e}^z c_2 c_3 c_4 \Pi \sigma \frac{m+\gamma+\eta}{k}}+c_2 \frac{m(m+\alpha+\omega)}{m+\omega}+c_3(m+\gamma+\eta)\\
 &+c_4 \frac{m+\gamma+\eta}{k}+c_4\sigma E +\bar{\beta}c_0 c_2 \mathrm{e}^{\frac{\delta^2}{\theta}} \frac{\Pi}{m}+\frac{c_2\bar{\beta I}}{4c_0}+\bar{\beta}\frac{\Pi}{m}c_2 c_0\left(\mathrm{e}^{2z}-\mathrm{e}^{\frac{\delta^2}{\theta}}\right)\\
 &= (m+\sigma+\xi) -4 \sqrt[4]{\bar{\beta} c_2 c_3 c_4 \Pi \sigma \frac{m+\gamma+\eta }{k} \mathrm{e}^{\frac{\delta^2}{16\theta}}}+c_2 \frac{m(m+\alpha+\omega)}{m+\omega}+c_3(m+\gamma+\eta)\\ 
 &+c_4 \frac{m+\gamma+\eta}{k} +\bar{\beta}c_0 c_2 \mathrm{e}^{\frac{\delta^2}{\theta}} \frac{\Pi}{m}+\frac{c_2\bar{\beta I}}{4c_0}+c_4 \sigma E+\bar{\beta}\frac{\Pi}{m}c_2 c_0\left(\mathrm{e}^{2z}-\mathrm{e}^{\frac{\delta^2}{\theta}}\right)\\&+(4 \sqrt[4]{\bar{\beta} c_2 c_3 c_4 \Pi \sigma \frac{m+\gamma+\eta }{k} })(\mathrm{e}^{\frac{\delta^2}{64\theta}}-\mathrm{e}^{\frac{z}{4}})
\end{aligned}
\end{equation}
For $\phi_3$ we have: \\
\begin{equation}
\begin{aligned}
\mathscr{L}\phi_3&\leq (m+\sigma+\xi) -4 \sqrt[4]{\bar{\beta}\mathrm{e}^z c_2 c_3 c_4 \Pi \sigma \frac{m+\gamma+\eta}{k}}+c_2 \frac{m(m+\alpha+\omega)}{m+\omega}+c_3(m+\gamma+\eta)\\
 &+c_4 \frac{m+\gamma+\eta}{k} +\bar{\beta}c_0 c_2 \mathrm{e}^{\frac{\delta^2}{\theta}} \frac{\Pi}{m}+(c_4+\frac{c_2\bar{\beta}}{4c_0(m+\gamma+\eta)})\sigma E+h_1(z)+h_2(z)\\
 &\leq (m+\sigma+\xi)-\frac{\bar{\beta}\Pi\sigma(m+\omega)\mathrm{e}^{\frac{\delta^2}{16\theta}}}{m(m+\alpha+\omega)(m+\gamma+\eta)}+\bar{\beta}c_0 c_2 \mathrm{e}^{\frac{\delta^2}{\theta}} \frac{\Pi}{m}+(c_4+\frac{c_2\bar{\beta}}{4c_0(m+\gamma+\eta)})\sigma E\\
   &\hspace{1.5cm}+ h_1(z)+h_2(z)
 \end{aligned}
\end{equation}
where we have:\\
$$c_2\frac{m(m+\alpha+\omega)}{m+\omega}=c_3(m+\gamma+\eta)=c_4\frac{m+\gamma+\eta}{k}=\frac{\bar{\beta}\Pi\sigma(m+\omega)\mathrm{e}^{\frac{\delta^2}{16\theta}}}{m(m+\alpha+\omega)(m+\gamma+\eta)}$$
And \\
$$h_1(z)=\bar{\beta}\frac{\Pi}{m}c_2 c_0\left(\mathrm{e}^{2z}-\mathrm{e}^{\frac{\delta^2}{\theta}}\right), h_2=(4 \sqrt[4]{\bar{\beta} c_2 c_3 c_4 \Pi \sigma \frac{m+\gamma+\eta }{k} })(\mathrm{e}^{\frac{\delta^2}{64\theta}}-\mathrm{e}^{\frac{z}{4}})$$
Then we have:\\
\begin{equation}
\begin{aligned}
\mathscr{L}\phi_3&\leq -\frac{(m+\sigma+\xi)(\mathcal{R}_0^s-1)}{2}+(c_4+\frac{c_2\bar{\beta}}{4c_0(m+\gamma+\eta)})\sigma E+ h_1(z)+h_2(z)
\end{aligned}
\end{equation}\label{4}
where \\
$$\mathcal{R}_0^s=\frac{\bar{\beta}\Pi\sigma(m+\omega)\mathrm{e}^{\frac{\delta^2}{16\theta}}}{m(m+\alpha+\omega)(m+\gamma+\eta)(m+\sigma+\xi)},   c_0=\frac{m(m+\sigma+\xi)(\mathcal{R}_0^s-1)}{2\bar{\beta}c_2\mathrm{e}^{\frac{\delta^2}{\theta}}\Pi}$$
For $\phi_4$ and acording to we have: \\
\begin{equation}
\begin{aligned}
\mathscr{L}\phi_4&\leq -\frac{\Pi}{S}+\alpha+m+\bar{\beta}\mathrm{e}^z \frac{\Pi}{m}-\frac{\alpha S}{V} +(m+\omega)-\frac{\sigma E}{I}+(m+\gamma+\eta)+m\\&-\frac{\eta I}{\frac{\Pi}{m}-(S+V+E+I)}+u+\alpha+\omega-\frac{\omega E}{S^0-S}-\theta z(\mathrm{e}^z-1)+\frac{\delta^2}{2}\mathrm{e}^z\\
\mathscr{L}\phi_4&\leq -\frac{\Pi}{S}-\frac{\alpha S}{V}-\frac{\sigma E}{I}-\frac{\eta I}{\frac{\Pi}{m}-(S+V+E+I)}-\frac{\omega E}{S^0-S}\\&\hspace{1.5cm}+5m+2\alpha+2\omega+\gamma+\eta + \left(\bar{\beta}\frac{\Pi}{m}+\frac{\delta^2}{2}\right)\mathrm{e}^z-\theta z(\mathrm{e}^z-1)
\end{aligned}
\end{equation}\label{5}
using the fact $$B:=\sup _{z \in \mathbb{R}}\left\{5m+2\alpha+2\omega+\gamma+\eta + \left(\bar{\beta}\frac{\Pi}{m}+\frac{\delta}{2}\right)\mathrm{e}^z-\frac{\theta z}{2}(\mathrm{e}^z-1)\right\}<\infty$$\\ we can choose a constant $M$ such that
\begin{equation}
\begin{aligned}
-M\frac{(m+\sigma+\xi)(\mathcal{R}_0^s-1)}{2}+B \leq-2 .
\end{aligned}
\end{equation}\label{6}

By combining we obtain for $\phi$ :

$$
\begin{aligned}
\mathscr{L} \phi & \leq-2+M(c_4+\frac{c_2\bar{\beta}}{4c_0(m+\gamma+\eta)})\sigma E -\frac{\Pi}{S}-\frac{\alpha S}{V}-\frac{\sigma E}{I}-\frac{\eta I}{\frac{\Pi}{m}-(S+V+E+I)}-\frac{\omega E}{S^0-S}\\\
&\hspace{1.5cm}-\frac{\theta z}{2}(\mathrm{e}^z-1)+ M h_1(z)+M h_2(z)\\
& :=F(S,V,E,I,z)+ M h_1(z)+M h_2(z) .
\end{aligned}
$$

Now let construct a compact set\\
$$
\begin{aligned}
\mathbb{H}=& \left\{\left(S, V, E, I,z \right) \in \Gamma \mid \epsilon \leq S\leq S^0-\epsilon^2, \epsilon \leq E, \epsilon^2 \leq V,\epsilon^2\leq I\right.\\
&\left. S+E+V+I\leq\frac{\Delta}{m}-\epsilon^3,\frac{-1}{\epsilon}\leq z\leq\frac{1}{\epsilon}\right\}
\end{aligned}
$$
where $\epsilon$ is a small constent satisfying the inequlities bleow.\\
\\let $\Gamma \backslash \mathbb{H}=\mathbb{H}^c=\bigcup_{i=1}^8 \mathbb{H}_i^c$, we have
\begin{equation}
\begin{aligned}
&\begin{aligned}
& \mathbb{H}_1^c =\left\{\left(S, V, E,I, z \right) \in \Gamma \mid E \in(0, \epsilon)\right\},\mathbb{H}_2^c  =\left\{\left(S, V, E,I, z \right) \in \Gamma \mid S \in(0, \epsilon)\right\},\\
& \mathbb{H}_3^c =\left\{\left(S, V, E,I, z \right) \in \Gamma \mid E \in[\epsilon,\infty),I\in(0,\epsilon^2)\right\},  \mathbb{H}_4^c  =\left\{\left(S, V, E,I, \chi \right) \in \Gamma \mid S \in[\epsilon,\infty),V\in(0,\epsilon^2)\right\}, \\&\mathbb{H}_4^c  =\left\{\left(S, V, E,I, z \right) \in \Gamma \mid S \in[\epsilon,\infty),V\in(0,\epsilon^2)\right\}, \\&\mathbb{H}_5^c =\left\{\left(S, V, E,I, z \right) \in \Gamma \mid E \in[\epsilon,\infty),S\in(S^0-\epsilon^2,\infty)\right\}
\\&\mathbb{H}_6^c =\left\{\left(S, V, E,I, z \right) \in \Gamma \mid S+V+E+I\in[\frac{\Delta}{m}-\epsilon^3,\infty),I \in (\epsilon^2,\infty) \right\},\\&\mathbb{H}_8^c  =\left\{\left(S, V, E,I, z \right) \in \Gamma\mid\lvert z\rvert \in(\frac{1}{\epsilon},\infty) \right\}.
\end{aligned}\\
&\text { In view of } \inf _{|z|>\epsilon^{-1}}\left\{z\left(e^z-1\right)\right\}=\frac{1}{\epsilon}\left(1-e^{-\frac{1}{\epsilon}}\right) \text {, we obtain }\\
&F(S,V,E,I,z) \leq \begin{cases}-2+M(c_4+\frac{c_2\bar{\beta}}{4c_0(m+\gamma+\eta)})\sigma\epsilon \leq-1, & \text { if } (S,V,E,I,z) \in \mathbb{H}_1^c, \\ -2+M(c_4+\frac{c_2\bar{\beta}}{4c_0(m+\gamma+\eta)})\sigma\frac{\Pi}{m}-\frac{\min \{\Pi,\alpha,\sigma,\eta,\omega \}}{\epsilon} \leq-1, & \text { if } (S,V,E,I,z) \in \bigcup_{i=2}^6 \mathbb{H}_i^c, \\ -2+M(c_4+\frac{c_2\bar{\beta}}{4c_0(m+\gamma+\eta)})\sigma\frac{\Pi}{m}-\frac{\theta}{2 \epsilon}\left(1-e^{-\frac{1}{\epsilon}}\right) \leq-1, & \text { if } (S,V,E,I,z) \in \mathbb{H}_7^c .\end{cases}
\end{aligned}
\end{equation}
In summury we have $F(S, V, E, I, z ) \leq-1$ for any $\left(S, V, E,I, z\right) \in \Gamma \backslash \mathbb{H}:=\mathbb{H}^c$.\\
Following a standard argument presented in \citep{Han2023}, we find that the process $\{z(t)\}_{t \geq 0}$ converges weakly to the unique stationary distribution, which is the normal distribution $\mathbb{N}\left(0, \frac{\delta^2}{2 \theta}\right)$. This stationary distribution has a density function given by :
$\pi(z)=\frac{\sqrt{\theta}}{\delta \sqrt{\pi}} e^{-\frac{\theta}{\delta^2} z^2}, \forall z \in \mathbb{R}$,  and we have for any $a>0$:\\
$$\int_{\mathbb{R}} e^{a x} \pi(z) d z=\frac{\sqrt{\theta}}{\delta \sqrt{\pi}} \int_{\mathbb{R}} e^{-\left(\frac{\sqrt{\theta} z}{\delta}-\frac{a \delta}{2 \sqrt{\theta}}\right)^2+\frac{(a \sigma)^2}{4 \theta}} d z=e^{\frac{(a \sigma)^2}{4 \theta}}, \quad a.s.$$ So and by using  the Itô’s integral, we have:\\
\begin{equation}
\begin{aligned}
0 \leq \frac{\mathbb{E} \phi(S,V,E,I,z)}{T}=&\frac{\mathbb{E} \phi(S(0),V(0),E(0),I(0),z(0))}{T}\\&+\frac{1}{T} \int_0^T \mathbb{E}(F(S(t),V(t),E(t),I(t),z(t))) d t\\&+M \mathbb{E}\left(\frac{1}{T} \int_0^T h_1(z(t)) d t\right) +M \mathbb{E}\left(\frac{1}{T} \int_0^T h_2(z(t)) d t\right).
\end{aligned}
\end{equation}
Using the fact that\\ $$F(S(t),V(t),E(t),I(t),z(t))\leq M(c_4+\frac{c_2\bar{\beta}}{4c_0(m+\gamma+\eta)})\sigma\frac{\Pi}{m}:=K \hspace{0.6cm} \forall \left(S, V, E, I,z \right) \in \Gamma$$
\begin{equation}
\begin{aligned}
\frac{1}{T} \int_0^T \mathbb{E}(F(S(t),V(t),E(t),I(t),z(t))) d t & \leq \frac{K}{T} \int_0^T \mathbf{1}_{\left\{S(t),V(t),E(t),I(t),z(t) \in \mathbb{H}\right\}} d t\\
&-\frac{1}{T} \int_0^T \mathbf{1}_{\left\{S(t),V(t),E(t),I(t),z(t) \in \Gamma \backslash \mathbb{H}\right\}} d t \\
& =-1+\frac{K+1}{T} \int_0^T \mathbf{1}_{\left\{S(t),V(t),E(t),I(t),z(t) \in \mathbb{H}\right\}} d t .
\end{aligned}
\end{equation}
In addition and by the ergodicité of   $\{z(t)\}_{t \geq 0}$ we have\\
$\lim _{T \rightarrow \infty}\mathbb{E}\left(\frac{1}{T} \int_0^T h_1(z(t)) d t\right)=0, $ a.s and $\lim _{T \rightarrow \infty}\mathbb{E}\left(\frac{1}{T} \int_0^T h_2(z(t)) d t\right)=0,$a.s\\
By combining we get 
$$
\liminf _{T \rightarrow \infty} \frac{1}{T} \int_0^T \mathbf{1}_{\left\{S(t),V(t),E(t),I(t),z(t) \in \mathbb{H}\right\}} d t \geq \frac{1}{K+1}>0, \quad \text { a.s. }
$$

Then by Fatou's lemma, it implies

$$
\liminf _{T \rightarrow \infty} \frac{1}{T} \int_0^T \mathbb{P}\left(S(0),V(0),E(0),I(0),z(0), \mathbb{H}, t\right) d t \geq \frac{1}{K+1}, \quad \text { for all } \left(S(0),V(0),E(0),I(0),z(0)\right) \in \Gamma,
$$

where $\mathbb{P}\left(S(0),V(0),E(0),I(0),z(0), \mathbb{H}, t\right)$ denotes the transition probability of \\$(S(t),V(t),E(t),I(t),z(t)) \in \mathbb{H}$ with initial value $(S(0),V(0),E(0),I(0),z(0))$.\\ which complete the proof.

\end{proof}
\section{Extinction }\label{section 4}
\begin{theorem}
For any initial value $\left(S(0), V(0), E(0), I(0), \chi (0)\right)\in\Gamma$, the solution\\ $\left(S(t), V(t), E(t), I(t), \chi (t)\right)$ of system \ref{2} has the following property\\
$$\mathop {\lim \sup }\limits_{t \to  + \infty } \frac{{\ln \left( {\frac{{{\omega _1}}}{{{m} + \sigma+\xi }}{E} + \frac{{{\omega _2}}}{{{m} + \gamma+\eta }}{I}} \right)}}{t} \le \min \left\{ {{m} + \sigma+\xi ,{m} + \gamma+\eta } \right\}\left( {\mathcal{R}_0^e - 1} \right)$$a.s.,\\ Where $\omega_1=\frac{\sigma}{\left(m+\gamma+\eta\right)\sqrt{\mathcal{R}_0}}$, $\omega_2=1$, ${\mathcal{R}_0^e}=\sqrt {\mathcal{R}_0}  + \frac{\sigma S^0\bar{\beta} \sqrt{e^{\frac{\delta^2}{\theta}}-2 e^{\frac{\delta^2}{4 \theta}}+1} }{\sqrt{\mathcal{R}_0} \left( m + \sigma+\xi \right)\min \left( {m+\sigma+\xi , m + \gamma+\eta } \right)}$\\
\\
Especially, if   ${\mathcal{R}_0^e}<1$, $\mathop {\lim }\limits_{t \to  + \infty }E\left( t \right) = 0$, $\mathop {\lim }\limits_{t \to  + \infty }I\left( t \right) = 0$,  a.s.
\end{theorem}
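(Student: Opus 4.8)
The plan is to collapse the two infected compartments into a single positive functional and estimate its exponential growth rate. I set
\[
U(t)=\frac{\omega_1}{m+\sigma+\xi}\,E(t)+\frac{\omega_2}{m+\gamma+\eta}\,I(t),
\]
with the weights $\omega_1,\omega_2$ fixed as in the statement. Because $E$ and $I$ carry no diffusion term (the Brownian motion enters the system only through $z$), the map $t\mapsto U(t)$ is pathwise $C^1$, so there is no It\^o correction and $\tfrac{d}{dt}\ln U=\dot U/U$. Substituting the drifts of $E$ and $I$ gives
\[
\dot U=\frac{\omega_1}{m+\sigma+\xi}\,\frac{\bar\beta\, e^{z}SI}{1+kI}-\omega_1 E+\frac{\sigma}{m+\gamma+\eta}\,E-I.
\]

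Second, I would exploit the invariant set $\Gamma$ together with the calibration of $\omega_1$. On $\Gamma$ one has $S\le S^0$ and $\tfrac{SI}{1+kI}\le S^0 I$, so the incidence term is bounded by $\tfrac{\omega_1\bar\beta S^0}{m+\sigma+\xi}e^{z}I$. The crucial algebraic point is that $\omega_1=\tfrac{\sigma}{(m+\gamma+\eta)\sqrt{\mathcal R_0}}$ is chosen precisely so that $\tfrac{\omega_1\bar\beta S^0}{m+\sigma+\xi}=\sqrt{\mathcal R_0}$, whence the $E$-coefficient $-\omega_1+\tfrac{\sigma}{m+\gamma+\eta}$ collapses to $\omega_1(\sqrt{\mathcal R_0}-1)$. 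Writing $e^{z}=1+(e^{z}-1)$ then yields the clean estimate
\[
\dot U\le(\sqrt{\mathcal R_0}-1)\bigl(\omega_1 E+I\bigr)+\sqrt{\mathcal R_0}\,(e^{z}-1)\,I .
\]
Dividing by $U$, I observe that $\tfrac{\omega_1 E+I}{U}$ is a convex combination of $m+\sigma+\xi$ and $m+\gamma+\eta$, hence in the extinction regime $\mathcal R_0\le1$ (so that $\sqrt{\mathcal R_0}-1\le0$) it is bounded below by $m_*:=\min\{m+\sigma+\xi,m+\gamma+\eta\}$, while $\tfrac{I}{U}\le m+\gamma+\eta$. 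This gives
\[
\frac{d}{dt}\ln U\le(\sqrt{\mathcal R_0}-1)\,m_*+\sqrt{\mathcal R_0}\,(e^{z}-1)\,\frac{I}{U}.
\]

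Third, I would integrate over $[0,t]$ and divide by $t$, noting that $\tfrac{\ln U(0)}{t}\to0$. For the remaining term I apply the Cauchy--Schwarz inequality in time,
\[
\frac1t\int_0^t(e^{z(s)}-1)\frac{I(s)}{U(s)}\,ds\le\Bigl(\frac1t\int_0^t(e^{z(s)}-1)^2\,ds\Bigr)^{1/2}\Bigl(\frac1t\int_0^t\frac{I(s)^2}{U(s)^2}\,ds\Bigr)^{1/2},
\]
and bound the second factor by $m+\gamma+\eta$. By the ergodicity of the Black--Karasinski driver and the moment identity established in the previous theorem's proof, $\tfrac1t\int_0^t(e^{z(s)}-1)^2ds\to\mathbb E_\pi[(e^{z}-1)^2]=e^{\delta^2/\theta}-2e^{\delta^2/(4\theta)}+1$ almost surely. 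Taking $\limsup_{t\to\infty}$ and using $\tfrac{\sigma S^0\bar\beta}{\sqrt{\mathcal R_0}(m+\sigma+\xi)}=\sqrt{\mathcal R_0}\,(m+\gamma+\eta)$ assembles the two contributions into exactly $m_*\bigl(\mathcal R_0^e-1\bigr)$, which is the asserted bound. When $\mathcal R_0^e<1$ this limit is strictly negative, forcing $U(t)\to0$ exponentially, and since $E,I\ge0$ this yields $E(t)\to0$ and $I(t)\to0$ almost surely.

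The main obstacle is twofold. First, the bookkeeping in the second step must be carried out so that the incidence weight reproduces $\sqrt{\mathcal R_0}$ exactly and the $E$- and $I$-coefficients align into the single factor $(\sqrt{\mathcal R_0}-1)$; any miscalibration of $\omega_1$ destroys the telescoping. Second, and more delicate, is the treatment of the sign-changing factor $e^{z}-1$: a naive first-moment estimate would produce $\mathbb E_\pi[e^{z}-1]=e^{\delta^2/(4\theta)}-1$, whereas it is the Cauchy--Schwarz step that converts this into the $L^2(\pi)$ quantity $\sqrt{e^{\delta^2/\theta}-2e^{\delta^2/(4\theta)}+1}$ appearing under the radical in $\mathcal R_0^e$. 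I would also need to justify the almost-sure ergodic convergence of the time averages and to make explicit the regime $\mathcal R_0\le1$ under which the lower bound by $m_*$ (rather than $\max\{m+\sigma+\xi,m+\gamma+\eta\}$) is the operative one.
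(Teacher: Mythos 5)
Your proposal is correct and follows essentially the same route as the paper: the same weighted functional $\frac{\omega_1}{m+\sigma+\xi}E+\frac{\omega_2}{m+\gamma+\eta}I$, the same collapse of the drift into the single factor $(\sqrt{\mathcal{R}_0}-1)$ via the eigenvector calibration (which you verify by direct algebra where the paper cites the left Perron eigenvector of the matrix $M$), and the same ergodicity-plus-Cauchy--Schwarz step converting the sign-changing term $e^{z}-1$ into the $L^2(\pi)$ quantity $\sqrt{e^{\delta^2/\theta}-2e^{\delta^2/(4\theta)}+1}$ (you apply Cauchy--Schwarz to the time average, the paper to the stationary measure --- an immaterial reordering). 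Your explicit flagging of the regime $\sqrt{\mathcal{R}_0}\le 1$ needed to justify the lower bound by $\min\{m+\sigma+\xi,\,m+\gamma+\eta\}$ is a point the paper's own proof silently assumes, so there is no gap on your side.
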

\begin{proof}
Firstly, we define a matrix\\ 
$$M = \left( {\begin{array}{*{20}{c}}
0&{\frac{{\bar{\beta } S^0}}{{m+\sigma+\xi}}}\\
{\frac{\sigma }{{m+\gamma+\eta }}}&0
\end{array}} \right)$$\\
 As stated in \citep{berman1994nonnegative}, it is guaranteed that for a non-singular matrix $M$ , a left eigenvector $(\omega_1,\omega_2)$ exists for its eigenvalue  $\sqrt{\mathcal{R}_0}$, such that $\sqrt{\mathfrak{R}_0}\left(\omega_1,\omega_2\right)=\left(\omega_1,\omega_2\right)M$
\\ where $\omega_1=\frac{\sigma}{(m+\gamma+\eta)\sqrt{\mathcal{R}_0}}$, $\omega_2=1$.\\  Denoting\\
\\
$V_e=\frac{\omega_1}{m+\sigma+\xi} E+\frac{\omega_2}{m+\gamma+\eta} I$, we have
$$
\begin{aligned}
\mathcal{L}\left(\ln V_e\right) & =\frac{1}{V_e}\left(\frac{\omega_1}{m+\sigma+\xi} \frac{\bar{\beta} e^z S I}{1+k I}-\varpi_1 E+\frac{\omega_2}{m+\gamma+\eta} \sigma E-\omega_2 I\right) \\
& \leq \frac{1}{V_e}\left(\frac{\omega_1}{m+\sigma+\xi} \bar{\beta } S^0 I-\omega_1 E+\frac{\omega_2}{m+\gamma+\eta} \sigma E-\omega_2 I\right)+\frac{1}{V_e} \frac{\omega_1}{m+\sigma+\xi}\left(e^z-1\right) \bar{\beta }  S^0 I \\
& \leq \frac{1}{V_e}\left(\omega_1, \omega_2\right)\left(M-I_2\right)\left[\begin{array}{c}
E \\
I
\end{array}\right]+\frac{I}{V_e} \frac{\omega_1 S^0 \bar{\beta }}{m+\sigma+\xi}|e^z-1| \\
& =\frac{1}{\frac{\omega_1}{m+\sigma+\xi} E+\frac{\omega_2}{m+\gamma+\eta} I}\left(\sqrt{\mathcal{R}_0}-1\right)\left(\omega_1 E+\omega_2 I\right)+\frac{I}{\frac{\omega_1}{m+\sigma+\xi} E+\frac{\omega_2}{m+\gamma+\eta} I} \frac{\omega_1 S^0 \bar{\beta}}{m+\sigma+\xi}|e^z-1| \\
& \leq \min \{m+\sigma+\xi, m+\gamma+\eta\}\left(\sqrt{\mathcal{R}_0}-1\right)+\frac{\sigma S^0 \bar{\beta}}{\sqrt{\mathcal{R}_0}(m+\sigma+\xi)}|e^z-1| .
\end{aligned}
$$
By the ergodicity of $\{z(t)\}_{t \geq 0}$ and the Hölder's inequality, we have :\\
$$
\begin{aligned}
\lim _{t \rightarrow \infty} \frac{1}{t} \int_0^t\left|e^{z(s)}-1\right| d s=\int_{\mathbb{R}}\left|e^z-1\right| \pi(z) d z \leq& \left(\int_{\mathbb{R}}\left(e^z-1\right)^2 \pi(z) d z\right)^{\frac{1}{2}}\\=& \sqrt{e^{\frac{\delta^2}{\theta}}-2 e^{\frac{\delta^2}{4 \theta}}+1}, \quad \text { a.s. }
\end{aligned}
$$

Integrating from $0$ to $t$ and then dividing by $t$ on both sides, and combining, we get \\
$$
\begin{aligned}
\limsup _{t \rightarrow \infty} \frac{\ln \left(\frac{\varpi_1}{m+\sigma+\xi} E+\frac{\omega_2}{m+\gamma+\eta} I\right)}{t} \leq & \min \{m+\sigma+\xi, m+\gamma+\eta\}\left(\sqrt{\mathcal{R}_0}-1\right)\\&+\frac{\sigma S^0 \bar{\beta}}{\sqrt{\mathcal{R}_0}(m+\sigma+\xi)} \sqrt{e^{\frac{\delta^2}{\theta}}-2 e^{\frac{\delta^2}{4 \theta}}+1}\\
= & \min \{m+\sigma+\xi, m+\gamma+\eta\}\left(\mathcal{R}_0^e-1\right) .
\end{aligned}
$$
Therefore, if $\mathcal{R}_0^e<1$, then the disease will go to extinction. this completes the proof.
\end{proof}
\section{Conclusion}\label{section 5}
This work focuses on a stochastic SVEIS epidemic model \ref{3}, where the Black-Karasinski process is introduced to represent the random effects in disease transmission. We define two critical values $\mathcal{R}_0^s$
and $\mathcal{R}_0^e$, to analyze the asymptotic behavior of the system. Specifically, we demonstrate the existence of a stationary distribution when $\mathcal{R}_0^s>1$,and establish the exponential extinction of the disease when $\mathcal{R}_0^e<1$.

\section*{Declarations}
The authors declare that there is no conflict of interest regarding the publication of this paper.
\bibliographystyle{plainnat}
\bibliography{SNbiblio}
\end{document}